\documentclass{article}
\usepackage[english]{babel}

\makeatletter
\g@addto@macro\bfseries{\boldmath}
\makeatother

\usepackage{enumerate}
\usepackage[shortlabels]{enumitem}
\usepackage{verbatim}
\usepackage[a4paper,left=3cm,right=3cm]{geometry}

\usepackage{amsmath,amssymb,mathrsfs,amsthm}
\usepackage{graphicx}
\usepackage{xurl}
\usepackage[colorlinks=true, allcolors=blue]{hyperref}
\usepackage{seqsplit}

\usepackage{xcolor}
\usepackage{listings}

\usepackage{microtype}
\usepackage[T1]{fontenc}

\usepackage{authblk}

\definecolor{codebgcolour}{rgb}{0.94,0.94,0.96}
\definecolor{codegreen}{rgb}{0,0.5,0}
\definecolor{codemagenta}{rgb}{0.9,0,0.5}

\lstdefinestyle{betterlst}{
	backgroundcolor=\color{codebgcolour},
	commentstyle=\color{codegreen},
	keywordstyle=\color{codemagenta},
	language=Python,
	numberstyle=\ttfamily\tiny,
	basicstyle=\ttfamily\footnotesize\linespread{1}\selectfont,
	breakatwhitespace=false,         
	breaklines=true,       
	keepspaces=true,                 
	numbers=left,                    
	numbersep=5pt,                  
	showspaces=false,                
	showstringspaces=false,
	showtabs=false,                  
	tabsize=2,
	frame=single
}

\newcommand{\spun}{\operatorname{spun}}
\newcommand{\tri}{\mathscr{T}}
\newcommand{\twoknot}{\mathscr{K}}
\newcommand{\lk}{\operatorname{lk}}
\newcommand{\R}{\mathbb{R}}

\newcommand{\is}[1]{{\noindent\ttfamily\seqsplit{#1}}} 

\usepackage[colorinlistoftodos]{todonotes}
\usepackage{cleveref}
\usepackage{booktabs}

\theoremstyle{plain}
\newtheorem{conjecture}{Conjecture}[section]
\newtheorem{theorem}[conjecture]{Theorem}

\theoremstyle{remark}
\newtheorem{remark}[conjecture]{Remark}

\title{Triangulating Spun 2-Knot Complements}

\title{Triangulating Spun $2$-Knot Complements}

\author[1]{Rhuaidi Antonio Burke}
\author[2]{Benjamin Burton}
\author[3]{Arunima Ray}
\author[4]{Jonathan Spreer}
\author[2]{Finn Thompson}
\author[5]{Orion Zymaris}

\affil[1]{
	{\itshape{Mathematical Institute, University of Oxford, Oxford OX2 6GG, United Kingdom}}
	
	{\ttfamily{rhuaidi.burke@maths.ox.ac.uk}}
}

\affil[2]{
	{\itshape{School of Mathematics and Physics, The University of Queensland, Brisbane, Queensland 4072, Australia}}
	
	{\ttfamily{bab@maths.uq.edu.au}}\\
	{\ttfamily{f.thompson@uq.net.au}}
}

\affil[3]{
	{\itshape{School of Mathematics and Statistics, University of Melbourne, Parkville, Victoria 3010, Australia}}
	
	{\ttfamily{aru.ray@unimelb.edu.au}}
}

\affil[4]{
	{\itshape{School of Mathematics and Statistics, The University of Sydney, Sydney, New South Wales 2006, Australia}}
	
	{\ttfamily{jonathan.spreer@sydney.edu.au}}
}

\affil[5]{
	{\itshape{School of Mathematics, Monash University, Victoria 3800, Australia}}
	 
	{\ttfamily{orion.zymaris@monash.edu}}
}
\normalsize

\date{\today}

\begin{document}
	
	\maketitle
	
	\begin{abstract}
		A $2$-knot is an embedding of a $2$-sphere into the $4$-sphere. Similar to the case of embedding circles into the $3$-sphere, this allows the $2$-sphere to be knotted. In this short paper, we present an algorithm to generate triangulations of the exteriors of $2$-knots obtained by spinning $1$-knots. We give an implementation of the algorithm in \emph{Regina} and present triangulations of exteriors obtained from all $1$-knots with up to eight crossings.
	\end{abstract}
	
	\section{Introduction}
	
	One focus of geometric topology is the study of submanifolds of co-dimension $2$, since these admit knotted embeddings into their ambient manifolds. The most prominent instance of this is the case of embeddings of the circle into the $3$-sphere, better known under the name of {\em knot theory}. However, embeddings of the $2$-sphere into the $4$-sphere, known as {\em $2$-knots}, also admit a rich theory and provide valuable insights into the mysterious world of $4$-manifolds. These embeddings, or, more precisely, their complements, are the focus of this note.
	
	Specifically, given a $2$-knot $\twoknot: \mathbb{S}^2 \to \mathbb{S}^4$, we wish to describe {\em triangulations} of the complement $\mathbb{S}^4 \setminus \twoknot (\mathbb{S}^2)$ -- that is, a decomposition of this $4$-manifold into a (small) number of $4$-simplices (henceforth referred to as {\em pentachora}). 
	Triangulations and similar discretisations have had a tremendous impact on the study of $3$-manifolds, with notable successes, such as complexity-theoretic results for standard problems and the \emph{algorithmic} study of $3$-manifolds \cite{HLP,KuperbergAlgorithm,matveev2007algorithmic}. Moreover, censuses consisting of large numbers of triangulations are instrumental in modern geometric topology research, providing a source of examples, counterexamples, and ample opportunity to obtain intuition about important topological problems. We hope that analogous 4-dimensional results and censuses might prove to be equally useful. 
	
	While the latest iterations of censuses for ($1$-)knot complements, and closed $3$-manifolds contain billions of objects \cite{Burton11ISSAC,burton:LIPIcs.SoCG.2020.25,Thistlethwaite20CrossingKnots}, existing censuses in dimension four are much smaller \cite{BurkeBurtonSpreer2026-census}, and only a handful of sporadic examples of triangulations of $2$-knot complements are known, see e.g.\ \cite{budneyBurtonHillman-CappellShanesonComp,BurkeBurtonSpreer2026-census}. 
	At the same time, all smooth 4-manifolds admit essentially unique triangulations~\cite{cairns-1935,whitehead-1940}, and hence it is reasonable to expect that triangulations will be a useful tool in (smooth) 4-manifold topology as well. Furthermore, such triangulations will be useful in the construction of complicated triangulations of $4$-manifolds, or even the potential construction of $4$-manifolds homeomorphic, but not PL-isomorphic to well-known standard piecewise linear $4$-manifolds (so-called exotic piecewise linear structures on $4$-manifolds), as has already been outlined, for instance, in \cite{Burke-software} and \cite{Issa2017TriangulatingCH}.
	
	 \paragraph*{Our Contribution.} 
	The class of \emph{spun 2-knots} was first described by Artin~\cite{Artin25} (see also~\cite[Section 3J]{rolfsen-book}). One begins with a 1-knot $K$ in $\R^3$. Cutting it open yields a properly embedded knotted arc $K^\circ$ in the upper half-space~$\R^3_+$. Rotating $(\R^3_+, K^\circ)$ around the $xy$-plane produces a 2-knot in $\R^4$, denoted by $\spun(K)$. We present an algorithm, with implementation in \emph{Regina} \cite{regina}, to construct a triangulation of the complement of $\spun(K)$ in $S^4$ given a triangulation of the complement of $K^\circ$ in $S^3$.
	
	 \paragraph*{Outline.}  
	The procedure is relatively straightforward to describe, but implementing it faces intricate technical issues. These are described in detail in Section~\ref{sec:algo}. In Section~\ref{sec:examples} we then look at examples of small triangulations of spun knot complements. 
	
	 \paragraph*{Acknowledgements.} 
	This project began during a workshop at the MATRIX Institute in Creswick in 2025. We thank the institute for its hospitality, and the organisers and participants of the workshop for stimulating conversations. The first author is supported by the EPSRC grant EP/Y016270/1, ``Computable manifolds''. The fourth author is partially supported by the Australian Research Council under the Discovery Project scheme, project number DP220102588.
	
	\section{Background}
	
	In this section we briefly go over the most important notions used in this article, and provide references for further reading.
	
	The main focus of our work is on $2$-knots and their complements. A \emph{$2$-knot} is a locally flat piecewise linear embedding $\twoknot : \mathbb{S}^2 \to \mathbb{S}^4$ of the $2$-sphere into the $4$-sphere, considered up to ambient isotopy. 
	
	Given a $2$-knot $\twoknot : \mathbb{S}^2 \to \mathbb{S}^4$, we refer to $\mathbb{S}^4 \setminus \twoknot (\mathbb{S}^2)$ as the {\em complement of $\twoknot$}, and to $\mathbb{S}^4 \setminus U(\twoknot)$ as the \emph{exterior} of $\twoknot$. Here, $U(\twoknot)$ denotes an open tubular neighbourhood of $\twoknot (\mathbb{S}^2)$ in $\mathbb{S}^4$. By construction, the boundary of a $2$-knot exterior is homeomorphic to $\mathbb{S}^2 \times \mathbb{S}^1$. Exteriors of $2$-knots form an interesting family of piecewise linear $4$-manifolds with boundary. 
	For more information on $2$-knots and 4-manifolds, we refer the reader to, for instance,  \cite{hillman19892} and \cite{gompf19994} respectively.
	
	Since this article focuses on {\em triangulations} of complements of $2$-knots, we now introduce some notions about them.
	
	Given points $\{ v_0, \ldots , v_d \} \subset \mathbb{R}^d$ in general position, $\Delta^d = \operatorname{conv} (v_0, \ldots , v_d)$ is called a {\em $d$-simplex} with {\em vertices} $v_i$, $0 \leq i \leq d$. Given a $d$-simplex $\Delta^d$, every subset of its vertices spans a simplex of lower dimension, called a {\em face} of $\Delta^d$. Faces of dimension $0$ and $1$ are called {\em vertices} and {\em edges}, and faces of co-dimension $1$ are called {\em facets}, respectively. Moreover, we refer to simplices of dimensions $2$, $3$, and $4$ as triangles, tetrahedra, and {\em pentachora} respectively.
	
	A \emph{(generalised) $d$-dimensional triangulation} $\tri$ is a finite collection of $n$ abstract $d$-simplices, some or all of whose facets are affinely identified (``glued'') in pairs. 
	The \emph{real boundary} of $\tri$ consists of all the facets that are not identified with any other facets. We allow facets of the same $d$-simplex to be identified. The gluings defining $\tri$ also have the effect of merging faces of the simplices into equivalence classes, which we refer to as the \emph{faces} of $\tri$. We say that $\tri$ is {\em valid}, if the facet gluings never identify a face with itself along a non-identity map.
	
	The \emph{link} $\lk(v)$ of a vertex $v$ of $\tri$ is the ``frontier'' of a small regular neighbourhood of $v$. We treat vertex links as triangulated $(d-1)$-dimensional spaces, formed by inserting a small $(d-1)$-simplex into each corner of each $d$-simplex, and then joining together the $(d-1)$-simplices from adjacent $d$-simplices along their facets. This mirrors the traditional concept of a link in a simplicial complex, but is modified to support generalised triangulations. 
	
	The following notions are defined recursively over decreasing dimension, where a triangulation of a (piecewise linear) $0$-dimensional sphere is given by the disjoint union of two vertices. For simplicity, in the following we only consider valid generalised triangulations where all vertex links are triangulations of (piecewise linear) $(d-1)$-manifolds. Let $\tri$ be such a triangulation. We refer to the vertices of $\tri$ with links that are spheres as {\em finite} vertices, and to the vertices with non-sphere links as {\em ideal} vertices. If $\tri$ has at least one ideal vertex, we call $\tri$ an {\em ideal triangulation}. We can think of an ideal vertex as not being part of $\tri$, but instead representing a boundary component of $\tri$ of topological type the vertex link. We say that such boundary components form the {\em ideal} boundary of $\tri$. If all vertex links of $\tri$ are piecewise linear triangulations of spheres, then the triangulation $\tri$ is said to be {\em piecewise linear} or {\em closed}. If all vertex links of $\tri$ are piecewise linear triangulated spheres or balls, $\tri$ is a piecewise linear triangulation of a manifold with real boundary, where, as explained above, the real boundary is given by the unglued facets in the triangulation. 
	
	If $\tri$ is a piecewise linear triangulation, with or without real boundary, then the underlying set, or the {\em carrier}, of $\tri$ is a piecewise linear manifold denoted by $|\tri|$. For ideal triangulations $\tri$, the carrier denotes the underlying set of $\tri$ with ideal vertices removed. For more background on triangulations, see \cite{BurkeBurtonSpreer2026-census,Issa2017TriangulatingCH,KimYamadaCS4Spheres}.
	
	\section{The Algorithm}
	\label{sec:algo}
	
	Let $K : \mathbb{S}^1 \to \mathbb{S}^3$ be a (classical) $1$-knot in the $3$-sphere. We describe how to construct a triangulation $\tri$ of the complement of the $2$-knot $\spun (K)$. In an intermediate step, this triangulation will be a $4$-dimensional triangulation with real boundary, that is, a triangulation containing a set of boundary tetrahedra triangulating $\mathbb{S}^2 \times \mathbb{S}^1$, and hence a triangulation of the exterior of $\spun(K)$ in $\mathbb{S}^4$. This is the procedure we describe in this section. However, the final output is an ideal triangulation, with the unique ideal vertex representing $\spun(K)$, and hence a triangulation of the complement of $\spun(K)$ in $\mathbb{S}^4$. This is purely for efficiency~reasons: in practice, ideal triangulations are typically smaller and simplify more effectively than triangulations with triangulated real boundary.
	
	\begin{theorem}
		\label{thm:main}
		Given a long knot $K^\circ\subset B^3$, the procedure described below constructs a triangulation of the exterior of the spun knot $\operatorname{spun}(K)\subset S^4$.
	\end{theorem}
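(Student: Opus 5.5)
The plan is to reduce the theorem to the standard topological description of the spun knot exterior, and then check that the combinatorial procedure realises it faithfully. Let $X = B^3 \setminus \nu(K^\circ)$ be the exterior of the long knot. Its boundary is a sphere made of two pieces. One is an annulus $A$, the boundary of the tubular neighbourhood of the arc. The other is the complementary surface $D = \partial B^3 \setminus \nu(\partial K^\circ)$, which is also an annulus (a $2$-sphere minus two discs). Spinning gives the model
\[
E(\spun(K)) \;\cong\; \bigl(X \times S^1\bigr) \cup_{D \times S^1} \bigl(D \times D^2\bigr),
\]
because rotating $(\R^3_+,K^\circ)$ about the plane $\partial\R^3_+$ sweeps $X$ around a circle, and the fixed boundary plane closes this up with a $D^2$-factor. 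Equivalently, $S^4 = \partial(B^3\times D^2) = B^3\times S^1 \cup S^2\times D^2$. The spun knot is $K^\circ\times S^1 \cup \partial K^\circ \times D^2$, and removing its neighbourhood leaves the displayed decomposition. I would verify this model first, including that the boundary is $A\times S^1 \cup (\partial D)\times D^2 \cong S^2\times S^1$.

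I would then show the procedure builds a triangulation of each piece and glues them compatibly, in three steps.

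\textbf{(1) The product $X\times S^1$.} Take a triangulation of $X$ with real boundary, in which $D$ and $A$ are subcomplexes. Form $\tri_X \times [0,1]$, subdividing each prism $\Delta^3\times[0,1]$ into four pentachora using a consistent vertex ordering (the standard staircase triangulation of a product). Then identify $\tri_X\times\{0\}$ with $\tri_X\times\{1\}$. The ordering must make the subdivisions agree on shared faces of adjacent prisms; that agreement is what makes this a valid triangulation of the product.

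\textbf{(2) The piece $D\times D^2$.} Triangulate $D^2$ as a cone on the circle $[0,1]/\{0\sim1\}$. Take products of the triangles of $D$ with the triangles of $D^2$, subdivided compatibly so that on $D\times S^1$ the result matches the restriction of step (1).

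\textbf{(3) Gluing and validity.} Glue the two pieces along $D\times S^1$. Check that every vertex link is a $3$-sphere or a $3$-ball, so the result is a PL triangulation with real boundary. For the link check, the product structure reduces it to joins of links in the factors: a join of a $2$-sphere or $2$-disc with a $0$-sphere or an interval is again a sphere or ball. Finally, the identification with the model above gives the homeomorphism type.

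I expect the main obstacle to be the combinatorial compatibility in steps (1)–(2). A generalised triangulation of $X$ need not admit a global vertex ordering, and its faces may be self-identified. The first thing to try is to pass to a suitable subdivision, for instance a barycentric subdivision of $\tri_X$. This yields a natural ordering by dimension of the barycentre, so the prism subdivisions agree on every shared face, including faces identified within a single tetrahedron. The same ordering would then be used along $D$, so that the $D\times D^2$ piece matches. The remaining points are to confirm that the unglued boundary tetrahedra are exactly those over $A\times S^1$ and $\partial D\times D^2$, and that no face is identified with itself by a non-identity map; both follow from the ordering being preserved by every gluing.
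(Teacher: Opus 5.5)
Your topological model is exactly the one the paper's proof rests on. That is $S^4=(B^3\times S^1)\cup(\partial B^3\times D^2)$, with the spun sphere $(K^\circ\times S^1)\cup(\partial K^\circ\times D^2)$ and the exterior $(X\times S^1)\cup_{D\times S^1}(D\times D^2)$, where $D$ is the green and $A$ the red annulus.

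The gap is that your steps (1)--(3) verify a different construction from the one the theorem is about. The theorem asserts that \emph{the stated} procedure works. That procedure never triangulates $D\times D^2$ separately, never orders vertices, and never uses staircase prisms. Instead it:
\begin{enumerate}
\item subdivides each $\Delta^3\times[0,1]$ into $82$ pentachora;
\item glues the two ends by the identity to get a triangulation $\tri''$ of $X\times S^1$;
\item folds each boundary prism $\tau\times[0,1]$ over a green triangle $\tau$ in half, i.e.\ glues $D\times S^1$ to itself by the fibrewise reflection $(b,t)\mapsto(b,1-t)$, leaving the red part locked.
\end{enumerate}
Nothing in your proposal addresses this fold, so as written you have shown that a \emph{different} algorithm outputs the exterior. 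The paper's proof is essentially your first paragraph plus the identification of the green fibre collapse with attaching $D\times D^2$.

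To close the gap you need two things.

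\textbf{Topology.} You must show the quotient of $X\times S^1$ by the fold is homeomorphic to $(X\times S^1)\cup_{D\times S^1}(D\times D^2)$. On a collar $D\times[0,\epsilon)\times S^1$, folding $\{0\}\times S^1$ by a reflection turns the annulus $[0,\epsilon)\times S^1$ into a disc. Each fibre circle becomes an arc between the two fixed points rather than a point, which is harmless. Near $\partial D=C_+\sqcup C_-$, $X$ looks like $C_\pm\times[0,\epsilon)^2$ with one green and one red edge. The same computation there gives $C_\pm\times[0,\epsilon)\times D^2$. So the remaining boundary is $(A\times S^1)\cup(\partial D\times D^2)\cong S^2\times S^1$.

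\textbf{Combinatorics.} The fold must pair off the tetrahedra of each green prism wall, with no tetrahedron sent to itself and no face identified with itself by a non-identity map. This requires a prism subdivision invariant under $x\mapsto 1-x$. Your single-layer staircase is not invariant: the flip sends it to the staircase for the reversed ordering, so it cannot be folded simplicially as it stands. The paper's coned $82$-pentachoron subdivision has this symmetry. It is also invariant under all symmetries of $\Delta^3$, hence compatible with arbitrary gluings of $\tri_X$, including self-gluings. This sidesteps the vertex-ordering issue that you resolve by barycentric subdivision.

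Your link-by-join validity check is more careful than the paper's proof, which argues only at the level of topology. However, it would have to be redone for the folded triangulation.
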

	
	Given a diagram of $K$, we cut $K$ open at a specified arc of the diagram. We then use a variant of Weeks' algorithm (\cite{menasco2005handbook}, Chapter 10, Section 3) to obtain an ideal triangulation of the complement of the cut-open (long) knot in a triangulation of the $3$-ball. The $2$-sphere bounding the $3$-ball is represented using four boundary triangles with two points pinched together at the ideal vertex $v$ representing the cut-open knot. 
	Denote a simplified version of this triangulation by $\tri'$. A schematic of $\tri'$ is depicted in Figure \ref{fig:longKnotInB3}.
	
	\begin{figure}[h!]
		\centering
		\includegraphics[width=0.55\linewidth]{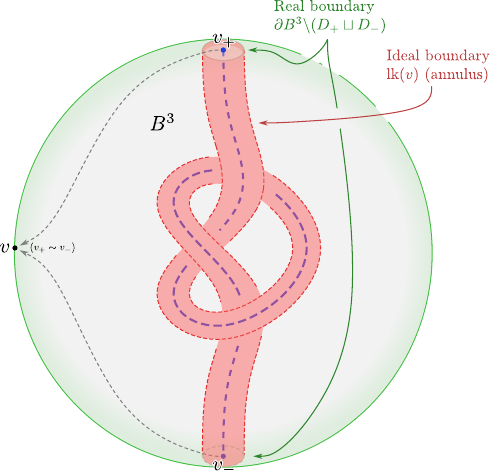}
		\caption{Schematic illustration of $\tri'$ (note that the endpoints, $v_\pm$, of the long knot are identified as part of the single ideal vertex $v$)}
		\label{fig:longKnotInB3}
	\end{figure}
	
	It follows that $\tri'$ has a mixture of real and ideal boundary, where the real part of the boundary (the four boundary triangles) describes the sphere with the interiors of two discs, $D_+$ and $D_-$, removed, and the ideal part of the boundary (the link of $v$) describes the annulus bounding the long knot inside this ball. In particular, in the language of {\em Regina}, $\tri'$ is invalid. However, truncating $v$ in $\tri'$ results in a valid triangulation of the knot exterior with real boundary. The triangulation $\tri'$ (simplified or not) is the output of the {\em Regina} function \texttt{D.longComplement()}, where $D$ is the input diagram of $K$. Note that the output of Weeks' algorithm can be obtained from the non-simplified output of \texttt{D.longComplement()} by simply gluing together the triangles that meet the specified cut-arc.
	
	With our triangulation $\tri'$ at hand, we realise Artin's spinning construction \cite{Artin25} as follows: We start by truncating the vertex $v$ to obtain a triangulation $\operatorname{trunc} (v,\tri')$. This produces additional boundary triangles triangulating an annulus, and overall a triangulation of the exterior of $K$ with real torus boundary. 
	We refer to the part of the boundary coming from the truncation annulus (the ideal boundary of $\tri'$) as the ``red'' part, and to the rest of the boundary (the real boundary of $\tri'$) as the ``green'' part of the boundary. 
	In the following steps of the construction we will leave the red part of the boundary, and subcomplexes arising from it, unchanged or {\em locked}.
	
	\begin{figure}[htb]
		\centering
		\includegraphics[width=0.4\linewidth]{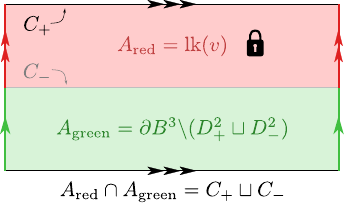}
		\caption{Structure of $\partial|\operatorname{trunc}(v,\tri')|$}
		\label{fig:truncBdryTorus}
	\end{figure}
	
	Next, we build an $\mathbb{S}^1$-bundle over $\operatorname{trunc} (v,\tri')$. For each tetrahedron, we construct a tetrahedral prism $\Delta^3 \times [0,1]$, containing $82$ pentachora: Every edge of type $\texttt{vertex}\times [0,1]$ is subdivided once. As a result, every rectangular wall of type $\texttt{edge}\times [0,1]$ is coned over its centre producing a subdivision into six triangles. The $3$-dimensional prism walls of type $\texttt{triangle}\times [0,1]$ contain $3 \cdot 6 + 2 = 20$ triangles that are coned over the centre of the prism, as illustrated in Figure~\ref{fig:prism}. Coning over the centre of the $4$-dimensional prism produces $4 \cdot 20 + 2 = 82$ pentachora overall. We use this particular subdivision because it admits an involution exchanging $\Delta^3 \times \{x\}$ with $\Delta^3 \times \{1-x\}$, which is required in the later ``folding'' step of the construction. 
	
	\begin{figure}[htb]
		\begin{center}
			\includegraphics[height=7cm]{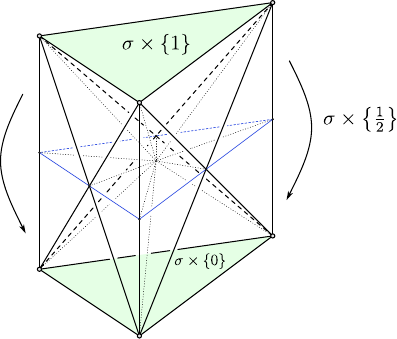}
		\end{center}
		\caption{Subdivision of a $3$-dimensional prism wall of $\Delta^3 \times [0,1]$ into $20$ tetrahedra, with an indication of how it is folded in half
			\label{fig:prism}}
	\end{figure}
	
	We then glue these prisms together in a manner that follows the gluings of the original tetrahedra. Next, we take the two copies of the triangulation that are formed from the tetrahedra at the two ends of these prisms, and glue these together according to the identity gluing to obtain a triangulation $\tri''$ of an $\mathbb{S}^1$-bundle over $\operatorname{trunc} (v,\tri')$.  
	
	By construction, $\tri''$ is a $4$-dimensional triangulation with real boundary a $3$-torus. This $3$-torus boundary splits into a red and a green part coming from the splitting of the boundary of $\operatorname{trunc} (v,\tri')$. Naturally, each part is homeomorphic to a thickened torus, i.e.\ a trivial $\mathbb{S}^1$-bundle over an annulus.
	
	\begin{figure}[htb]
		\centering
		\includegraphics[width=0.5\linewidth]{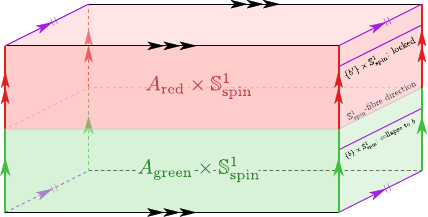}
		\caption{Structure of $\partial\tri''\cong T^3$. Fibres over the green part will be collapsed; fibres over the red part are locked.}
		\label{fig:threeTorusBdry}
	\end{figure}
	
	For each point $b$ on the green part of $\partial \,|\operatorname{trunc} (v,\tri')|$, we collapse the fibre $b \times \mathbb{S}^1$ in $|\tri''|$ to a single point. This can be equivalently described as attaching a copy of ``the green part of the boundary of $\operatorname{trunc} (v,\tri')$'' $\times \,\, \mathbb{D}^2$ to $|\tri''|$ so that, for each point $b$ on the green part of the boundary, the fibre $b \times \mathbb{S}^1$ becomes the boundary of the corresponding disc $b \times \mathbb{D}^2$. We implement this as follows: for each triangle of the green part of the boundary of $\operatorname{trunc} (v,\tri')$, we fold the corresponding prism in $\tri''$ in half, so that the top half folds onto the bottom. The red part of the boundary, on the other hand, is locked and will not be folded.
	
	\begin{proof}[Proof of Theorem~\ref{thm:main}]
		The proof is a review of the above combinatorial operations, verifying that these yield a triangulation of the exterior of $\spun(K)$. The reader may find it helpful to refer to Figures~\ref{fig:longKnotInB3}, \ref{fig:truncBdryTorus} and \ref{fig:threeTorusBdry} along the way.
		
		Let \(a\subset B^3\) denote the cut-open knot arc, with endpoints
		\(v_+\) and \(v_-\) on \(\partial B^3\). Choose a regular neighbourhood
		\(N(a)\cong [0,1]\times \mathbb D^2\) whose intersection with
		\(\partial B^3\) is the union of two discs
		\[
		D_+ \sqcup D_- \subset \partial B^3,
		\]
		where \(D_\pm\) is a small disc about \(v_\pm\). Then
		\[
		X' = B^3\setminus \operatorname{int}N(a)
		\]
		is the exterior of the long knot. Its boundary decomposes as
		\[
		\partial X' = A_{\mathrm{green}}\cup A_{\mathrm{red}},
		\]
		where
		\[
		A_{\mathrm{green}}
		=
		\partial B^3\setminus \operatorname{int}(D_+\sqcup D_-)
		\]
		is the part inherited from the boundary of the ball, and
		\[
		A_{\mathrm{red}}
		=
		\partial N(a)\setminus \operatorname{int}(D_+\sqcup D_-)
		\]
		is the annular boundary coming from the regular neighbourhood of the long
		knot. These two annuli meet along the two circles
		\[
		C_+ \sqcup C_- =
		\partial D_+ \sqcup \partial D_-.
		\]
		All this is illustrated in Figure~\ref{fig:longKnotInB3}. The triangulation \(\operatorname{trunc}(v,\tri')\) is a triangulation of
		this space \(X'\), with the red and green parts of the boundary corresponding
		to the two annuli above, see Figure~\ref{fig:truncBdryTorus}.
		
		Recall the standard description of \(S^4\) as the result of spinning a
		\(3\)-ball:
		\[
		S^4
		\cong
		(B^3\times \mathbb S^1_{\mathrm{spin}})
		\cup_{\partial B^3\times \mathbb S^1_{\mathrm{spin}}}
		(\partial B^3\times \mathbb D^2_{\mathrm{spin}}).
		\]
		Here the first piece records the \(3\)-ball as it rotates through the ``spin circle'', and the second piece fills in the fibres over the boundary
		\(\partial B^3\). Under this decomposition, the Artin spin of the long knot
		arc \(a\) is given by
		\[
		\Sigma
		=
		(a\times \mathbb S^1_{\mathrm{spin}})
		\cup
		(v_+\times \mathbb D^2_{\mathrm{spin}})
		\cup
		(v_-\times \mathbb D^2_{\mathrm{spin}}).
		\]
		The first piece \(a\times \mathbb S^1_{\mathrm{spin}}\) is an annulus, and
		the two discs \(v_+\times \mathbb D^2_{\mathrm{spin}}\) and
		\(v_-\times \mathbb D^2_{\mathrm{spin}}\) cap its two boundary components.
		Thus \(\Sigma\) is a \(2\)-sphere. Moreover, because the arc \(a\) was
		obtained by cutting the original knot \(K\) open, this \(2\)-sphere is
		precisely the Artin spin \(\operatorname{spun}(K)\).
		
		Now take the exterior of this sphere in the above decomposition of \(S^4\).
		Inside \(B^3\times \mathbb S^1_{\mathrm{spin}}\), removing a regular
		neighbourhood of \(a\times \mathbb S^1_{\mathrm{spin}}\) leaves
		\[
		X'\times \mathbb S^1_{\mathrm{spin}}.
		\]
		Inside the filling piece
		\(\partial B^3\times \mathbb D^2_{\mathrm{spin}}\), removing the two disc
		neighbourhoods
		\[
		D_+\times \mathbb D^2_{\mathrm{spin}}
		\quad\text{and}\quad
		D_-\times \mathbb D^2_{\mathrm{spin}}
		\]
		leaves
		\[
		\bigl(\partial B^3\setminus \operatorname{int}(D_+\sqcup D_-)\bigr)
		\times \mathbb D^2_{\mathrm{spin}}
		=
		A_{\mathrm{green}}\times \mathbb D^2_{\mathrm{spin}}.
		\]
		Therefore the exterior of the spun sphere $\Sigma$ is
		\[
		S^4\setminus \operatorname{int}N(\operatorname{spun}(K))
		\cong
		(X'\times \mathbb S^1_{\mathrm{spin}})
		\cup_{A_{\mathrm{green}}\times \mathbb S^1_{\mathrm{spin}}}
		(A_{\mathrm{green}}\times \mathbb D^2_{\mathrm{spin}}).
		\]
		
		This is exactly the topological operation performed in the algorithm.
		The triangulation \(\tri''\) triangulates
		\(X'\times \mathbb S^1_{\mathrm{spin}}\). For each point
		\(b\in A_{\mathrm{green}}\), collapsing the fibre
		\[
		\{b\}\times \mathbb S^1_{\mathrm{spin}}
		\]
		is equivalent to attaching the disc
		\[
		\{b\}\times \mathbb D^2_{\mathrm{spin}},
		\]
		see Figure~\ref{fig:threeTorusBdry}. Thus the green fibre collapse attaches
		\(A_{\mathrm{green}}\times \mathbb D^2_{\mathrm{spin}}\), exactly as in the
		exterior of the spun sphere. The red annulus
		\(A_{\mathrm{red}}\) comes from the boundary of the regular neighbourhood of
		the long knot, and so its product
		\(A_{\mathrm{red}}\times \mathbb S^1_{\mathrm{spin}}\) is part of the
		boundary of the tubular neighbourhood of the spun \(2\)-sphere. 
	\end{proof}
	
	The procedure is implemented in {\em Regina} \cite{regina} as the function \texttt{boundarySpin()} and, in particular, the $4$-dimensional example \texttt{spun()}, which takes as input a knot, and optionally a specific strand of the knot at which to perform the cut. The implemented procedure 
	additionally simplifies the triangulation, makes it ideal, and simplifies it again before returning it. Due to the typically large size of the triangulations produced by the algorithm `out-of-the-box' the \texttt{spun()} function is currently only accessible via the C++/Python APIs of \emph{Regina} (e.g.\ via \texttt{regina-python} or the Python shell accessible in the GUI). An example, in Python, using the trefoil as input, is executed as follows.
	
	\begin{lstlisting}
>>> tref = ExampleLink.trefoil()
>>> spunTref = Example4.spun(tref)
>>> spunTref
<regina.Triangulation4: Ideal orientable 4-D triangulation, f = ( 2 24 86 105 42 )>
\end{lstlisting}
	
	\begin{remark}
		The construction of an ideal triangulation of the $\spun(K)$ complement relies on the boundary of $\tri''$ to not be simplified too much. This property is satisfied as long as the red part of the boundary is a triangulation homeomorphic to an annulus with the two components of its boundary disjoint. This is ensured by locking the entire truncation surface of $\operatorname{trunc} (v,\tri')$.
		
		Also note that working with ideal triangulations throughout this process is difficult: if $K$ is represented as a single point, spinning $K$ on the level of ideal triangulations will produce an edge representing the $2$-knot, which, as a triangulation, is more difficult to handle.
	\end{remark}
	
	\begin{remark}
		The construction is tailored specifically to Artin spinning.
		It is natural to ask whether analogous constructions can be
		developed for twist-spinning or more general deform-spinning
		operations. Such extensions constitute avenues for future work.
	\end{remark}

	\subsection{Running times and triangulation sizes}		
	Running times for low-crossing knots of our implementation are listed in Table~\ref{tab:examples}. On average, they are $6$, $45$, $50.3$, $114.33$, $292.71$ and $626.04$ seconds for $3$-, $4$-, $5$-, $6$-, $7$-, and $8$-crossing knots. For knots with $\geq 5$ crossings this roughly amounts to a doubling of running time per crossing (i.e.\ an experimentally observed running time of roughly $O(c\cdot 2^c)$ for $c$-crossing knots). The bulk of this running time is used for simplifying large intermediate triangulations. For this reason, it pays to analyse running times from the viewpoint of sizes of intermediate triangulations that our methods must handle: 
	
	Starting from a knot diagram with, say, $c$ crossings, the triangulation of the long knot complement is of size roughly $4c$ before simplification. Truncating the ideal vertex inflates this size by a similar or moderately larger linear constant (largely depending on the triangulation). It follows that the truncated 3-dimensional triangulation has size roughly one order of magnitude higher than $c$. Passing to the bundle construction inflates this by a multiplicative factor of $82$. Hence, altogether, if we construct the spun knot of a $c$-crossing diagram, we have to expect to go through a $4$-dimensional triangulation with around $1000 c$ pentachora.
	
	While this seems like bad news, we can observe that the intermediate triangulation size only grows linearly in the crossing number of the diagram, and this linear upper bound is guaranteed by the construction (Weeks' algorithm, truncation, bundle construction), even without intermediate simplification.
	
	Comparing this to the actual output size of the triangulations presented in Table~\ref{tab:examples}, we see that the average sizes of the output triangulations are $18.11$, $17.06$, and $17.31$ times the crossing number for $6$-, $7$-, and $8$-crossing knots, while the sizes of our triangulations after simplification are $4.22$, $3.8$, and $4.56$ times the crossing number -- arguably a fairly small multiplier.
	
	\begin{table}[htbp]
		\centering
		\begin{tabular}{@{}lcrr@{}}
			\toprule
			$K$ & min.\ $f_4$ of $\mathbb{S}^4 \setminus \spun(K)$ & initial size & time$^{\ast}$ \\
			\midrule
			$3a_1$      &  6 &  30 &    6 \\
			\addlinespace
			$4a_1$      & 16 &  58 &   45 \\
			\addlinespace
			$5a_1$      & 18 &  72 &   72 \\
			$5a_2$      & 10 &  52 &   29 \\
			\addlinespace
			$6a_3$      & 22 &  94 &  112 \\
			$6a_2$      & 26 & 102 &   99 \\
			$6a_1$      & 28 & 112 &  132 \\
			\addlinespace
			$7a_4$      & 22 & 104 &  138 \\
			$7a_5$      & 26 &  86 &   89 \\
			$7a_6$      & 28 & 120 &  160 \\
			$7a_3$      & 28 & 140 &  563 \\
			$7a_2$      & 32 & 126 &  921 \\
			$7a_1$      & 38 & 178 &  141 \\
			$7a_7$      & 12 &  82 &   37 \\
			\addlinespace
			$8a_{11}$   & 26 &  80 &   99 \\
			$8a_8$      & 28 &  80 &   72 \\
			$8a_{18}$   & 34 & 124 &  192 \\
			$8a_{17}$   & 28 & 120 &  422 \\
			$8a_{13}$   & 48 & 148 &  146 \\
			$8a_6$      & 32 & 134 &  211 \\
			$8a_{10}$   & 34 & 124 &  160 \\
			$8a_{16}$   & 34 & 120 &  382 \\
			$8a_4$      & 40 & 148 & 1108 \\
			$8a_9$      & 40 & 134 & 1226 \\
			$8a_7$      & 40 & 162 & 1625 \\
			$8a_3$      & 42 & 158 &  548 \\
			$8a_5$      & 42 & 178 & 1122 \\
			$8a_1$      & 42 & 170 & 1240 \\
			$8a_2$      & 46 & 170 &  552 \\
			$8a_{15}$   & 44 & 178 & 1028 \\
			$8a_{14}$   & 50 & 190 & 1653 \\
			$8a_{12}$   & 48 & 220 &  862 \\
			$8n_1$      & 24 & 104 &  282 \\
			$8n_2$      & 32 & 116 &  189 \\
			$8n_3$      & 12 &  50 &   28 \\
			\bottomrule
		\end{tabular}
		\caption{Smallest known values of $f_4$ of spun-knot complements of low-crossing knots. Isomorphism signatures of the corresponding triangulations are given in Appendix~\ref{sec:app}. $^{\ast}$ Running time in seconds for calling the spinning construction as implemented in {\em Regina}. The presented triangulations have been obtained from the initial {\em Regina} output by extensive simplification attempts.}
		\label{tab:examples}
	\end{table}
	
	\section{Small triangulations of \texorpdfstring{$2$}{2}-knot complements}
	\label{sec:examples}
	
	We apply the algorithm described in Section~\ref{sec:algo} to the census of 1-knots up to $8$ crossings. The results are summarised in Table~\ref{tab:examples}. Isomorphism signatures -- compact strings encoding the triangulation uniquely up to combinatorial isomorphism \cite{Burton-IsoSigs} -- of the triangulations are given in Appendix \ref{sec:app}.

	Since the intermediate triangulations produced by the construction are quite large, it is non-trivial to extend this to a larger census of knots. The bottleneck for this is not the construction itself, but the simplification of the resulting output triangulations; see the last paragraph of Section~\ref{sec:algo}.
	
	\begin{remark}
		In Table \ref{tab:examples}, all of the triangulations have a single vertex, and by the Dehn--Sommerville equations and the equation for the Euler characteristic, this means that the $f$-vector is entirely determined by the number of edges $f_1$ -- in particular the number of pentachora $f_4$ is equal to $2f_1$.
	\end{remark}
	
	Among all spun knots arising from knots with at most eight crossings, the smallest triangulation found is the spun trefoil with just six pentachora. Using the ideal census with up to six pentachora, we are able to verify that six pentachora is indeed minimal for the spun trefoil. With the exception of the spun trefoil, we do not know whether any of the other triangulations obtained are minimal.
	
	The observant reader might notice that the complements of spun torus knots admit much smaller triangulations than the other knots -- this will be the topic of follow-up work in which we will investigate infinite families of such triangulations.

	\clearpage
	
	\bibliographystyle{plainurl}
	\bibliography{bibliography}

\begin{thebibliography}{10}

\bibitem{Artin25}
Emil Artin.
\newblock Zur {I}sotopie zweidimensionaler {F}l\"achen im {$R_4$}.
\newblock {\em Abh. Math. Sem. Univ. Hamburg}, 4(1):174--177, 1925.
\newblock \href {https://doi.org/10.1007/BF02950724}
  {\path{doi:10.1007/BF02950724}}.

\bibitem{budneyBurtonHillman-CappellShanesonComp}
R.~Budney, B.~A. Burton, and J.~Hillman.
\newblock Triangulating a {C}appell-{S}haneson knot complement.
\newblock {\em Math. Res. Lett.}, 19(5):1117--1126, 2012.
\newblock \href {https://doi.org/10.4310/MRL.2012.v19.n5.a12}
  {\path{doi:10.4310/MRL.2012.v19.n5.a12}}.

\bibitem{BurkeBurtonSpreer2026-census}
Rhuaidi Burke, Benjamin Burton, and Jonathan Spreer.
\newblock Small triangulations of 4-manifolds and the 4-manifold census.
\newblock {\em Discrete {\&} Computational Geometry}, Feb 2026.
\newblock \href {https://doi.org/10.1007/s00454-026-00818-w}
  {\path{doi:10.1007/s00454-026-00818-w}}.

\bibitem{Burke-software}
Rhuaidi~Antonio Burke.
\newblock Practical software for triangulating and simplifying 4-manifolds.
\newblock {\em Journal of Computational Geometry}, 16, 2025.
\newblock \href {https://doi.org/10.20382/JOCG.V16I2A4}
  {\path{doi:10.20382/JOCG.V16I2A4}}.

\bibitem{Burton11ISSAC}
Benjamin~A. Burton.
\newblock Detecting genus in vertex links for the fast enumeration of
  3-manifold triangulations.
\newblock In {\em Proceedings of the 36th international symposium on Symbolic
  and algebraic computation (ISSAC 2011)}, pages 59--66. ACM, 2011.
\newblock \href {https://doi.org/10.1145/1993886.1993901}
  {\path{doi:10.1145/1993886.1993901}}.

\bibitem{Burton-IsoSigs}
Benjamin~A. Burton.
\newblock The {P}achner graph and the simplification of 3-sphere
  triangulations.
\newblock In {\em 27th International Symposium on Computational Geometry
  (SoCG)}, pages 153--162. ACM, New York, 2011.
\newblock \href {https://doi.org/10.1145/1998196.1998220}
  {\path{doi:10.1145/1998196.1998220}}.

\bibitem{burton:LIPIcs.SoCG.2020.25}
Benjamin~A. Burton.
\newblock {The Next 350 Million Knots}.
\newblock In Sergio Cabello and Danny~Z. Chen, editors, {\em 36th International
  Symposium on Computational Geometry (SoCG 2020)}, volume 164 of {\em Leibniz
  International Proceedings in Informatics (LIPIcs)}, pages 25:1--25:17,
  Dagstuhl, Germany, 2020. Schloss Dagstuhl -- Leibniz-Zentrum f{\"u}r
  Informatik.
\newblock URL:
  \url{https://drops.dagstuhl.de/entities/document/10.4230/LIPIcs.SoCG.2020.25},
  \href {https://doi.org/10.4230/LIPIcs.SoCG.2020.25}
  {\path{doi:10.4230/LIPIcs.SoCG.2020.25}}.

\bibitem{regina}
Benjamin~A. Burton, Ryan Budney, William Pettersson, et~al.
\newblock Regina: Software for low-dimensional topology.
\newblock {\tt http://\allowbreak regina-normal.\allowbreak github.\allowbreak
  io/}, 1999--2025.

\bibitem{cairns-1935}
S.~S. Cairns.
\newblock Triangulation of the manifold of class one.
\newblock {\em Bull. Am. Math. Soc.}, 41:549--552, 1935.
\newblock \href {https://doi.org/10.1090/S0002-9904-1935-06140-3}
  {\path{doi:10.1090/S0002-9904-1935-06140-3}}.

\bibitem{gompf19994}
R.E. Gompf and A.~Stipsicz.
\newblock {\em 4-Manifolds and Kirby Calculus}.
\newblock Graduate studies in mathematics. American Mathematical Society, 1999.
\newblock URL: \url{https://books.google.com.au/books?id=ahLKzRUTBbUC}.

\bibitem{HLP}
Joel Hass, Jeffrey~C. Lagarias, and Nicholas Pippenger.
\newblock The computational complexity of knot and link problems.
\newblock {\em J. ACM}, 46(2):185--211, 1999.
\newblock \href {https://doi.org/10.1145/301970.301971}
  {\path{doi:10.1145/301970.301971}}.

\bibitem{hillman19892}
J.~Hillman.
\newblock {\em 2-Knots and Their Groups}.
\newblock Australian Mathematical Society Lecture Series. Cambridge University
  Press, 1989.
\newblock URL: \url{https://books.google.com.au/books?id=ETo4AAAAIAAJ}.

\bibitem{Issa2017TriangulatingCH}
Ahmad Issa.
\newblock Triangulating {C}appell-{S}haneson homotopy 4-spheres.
\newblock Master's thesis, University of Melbourne, 2017.
\newblock URL: \url{https://api.semanticscholar.org/CorpusID:125974898}.

\bibitem{KimYamadaCS4Spheres}
Min~Hoon Kim and Shohei Yamada.
\newblock Ideal classes and {C}appell-{S}haneson homotopy 4-spheres.
\newblock {\em Kyungpook Mathematical Journal}, 63(3), 2023.
\newblock \href {https://doi.org/10.5666/KMJ.2023.63.3.373}
  {\path{doi:10.5666/KMJ.2023.63.3.373}}.

\bibitem{KuperbergAlgorithm}
Greg Kuperberg.
\newblock {Algorithmic Homeomorphism of 3-Manifolds as a Corollary of
  Geometrization}.
\newblock {\em Pac. J. Math.}, 301(1), 2019.
\newblock \href {https://doi.org/10.2140/pjm.2019.301.189}
  {\path{doi:10.2140/pjm.2019.301.189}}.

\bibitem{matveev2007algorithmic}
S.~Matveev.
\newblock {\em Algorithmic Topology and Classification of 3-Manifolds}.
\newblock Algorithms and Computation in Mathematics. Springer Berlin
  Heidelberg, 2007.
\newblock URL: \url{https://books.google.com.au/books?id=vFLgAyeVSqAC}.

\bibitem{menasco2005handbook}
William Menasco and Morwen Thistlethwaite, editors.
\newblock {\em Handbook of knot theory}.
\newblock Elsevier B. V., Amsterdam, 2005.

\bibitem{rolfsen-book}
Dale Rolfsen.
\newblock {\em Knots and links}, volume No. 7 of {\em Mathematics Lecture
  Series}.
\newblock Publish or Perish, Inc., Berkeley, CA, 1976.

\bibitem{Thistlethwaite20CrossingKnots}
Morwen~B Thistlethwaite.
\newblock The enumeration and classification of prime 20-crossing knots.
\newblock {\em Algebr. Geom. Topol.}, 25:329--344, 2025.
\newblock \href {https://doi.org/10.2140/agt.2025.25.329}
  {\path{doi:10.2140/agt.2025.25.329}}.

\bibitem{whitehead-1940}
J.~H.~C. Whitehead.
\newblock On {{\(C^1\)}}-complexes.
\newblock {\em Ann. Math. (2)}, 41:809--824, 1940.
\newblock \href {https://doi.org/10.2307/1968861} {\path{doi:10.2307/1968861}}.

\end{thebibliography}
		
	\appendix
	
	\section{Isomorphism signatures}
	\label{sec:app}
	
	In this appendix we list isomorphism signatures of the smallest triangulations of spun-knot complements that we found. 
	
	To reconstruct the triangulations in {\em Regina}---
	\begin{itemize}
		\item via the GUI: produce a new 4-manifold triangulation by selecting type of triangulation ``From isomorphism signature'' and pasting in the string \texttt{isoSig}; or
		\item via the Python API: \texttt{T = Triangulation4.fromIsoSig(isoSig)}
	\end{itemize}
	
	Knot complements of $\spun (K)$ are listed in the order in which they appear in the {\em Regina} census data files (which list hyperbolic alternating knots first, hyperbolic non-alternating second, and torus knots third). Notation follows Dowker--Thistlethwaite.
	
	\subsection*{Complement of the spun trefoil $\mathbb{S}^4 \setminus \spun (3a_1)$}
		
	\is{gLMMMQbcddeeffff0baaYaYaeawbpapaYaYa}	
	
	\subsection*{Complement of the spun figure-eight knot $\mathbb{S}^4 \setminus \spun (4a_1)$}
		
	\is{qLLLLMLvAQQQQQcbccfehikhplmpmlklonmopnpoWaYa2aEaMbaaqbRaYaZaxbkbobkb6a5aMbKbebDaKbgayababa}	
	
	\subsection*{$\mathbb{S}^4 \setminus \spun (5a_1)$}
		
	\is{sLLLLvvQMQQAAQQQcbgflnohgonpjnokpkpmonrqpqrraagarbaaaaxbxbPbga9aBaKb1bvafaJazbnaQaYaYaKaZa0baaoasafa}
	
	\subsection*{$\mathbb{S}^4 \setminus \spun (5a_2)$}
	
	\is{kLLLMQLQQccecegfgfiijjjijiZbXbXaFaaaQa0bQa1a1a1a1abbvaFava}
	
	\subsection*{$\mathbb{S}^4 \setminus \spun (6a_3)$}
		
	\is{wLLLLvvwAMPQQMQMQQQccccgfinjpnqrrkmrlosoptsvtupqrttvuvLaybIbTaPb2aVambEbtbqalaaaHb1bdb1bbaEblb4apaaa3bua0bPbPbcaoaoa2ababa}
	
	\subsection*{$\mathbb{S}^4 \setminus \spun (6a_2)$}
		
	\is{ALLLLAwMLwLAwPMQPQQMQQkabcgdgiimonmjskswntwututqyutyvuzxxvzxyzzfafafaPafayawawaearbPaYbvaCagaqblaOaobzbeaTaEa4afaha7aIahamaxarbDbqafaFaGa+aibOa}

\subsection*{$\mathbb{S}^4 \setminus \spun (6a_1)$}

\is{CLLLLvPwPvQMAPMzQAPPQQQQcbccglkjhloqsipnkuvvqvtwryxAAswtywuxAxyzzBBBWa1a2aabaaaaMbRaRaQaabKbbaabWa2apaaaaaEaGaMbGawakbObaalaTbTbOa0aqb2aDbEaDbFbYayawawaPb}

\subsection*{$\mathbb{S}^4 \setminus \spun (7a_4)$}

\is{wLLLLMLLQvAQPAzQQQQcabcefhihjjmkkqpnlqnorrvuutqssvvtuvcacacaDbDbGaPbGaba6aEbRb3bkbGavagaXbMblbQbFaGbGbAapacaObWabbGa2ambfa}

\subsection*{$\mathbb{S}^4 \setminus \spun (7a_5)$}

\is{ALLLLAMvzzMPzQMQwQQAQQkbeegdihijmmrrqlnsqnsporvuuvxtuxxzvxywyzz4a+a+aGb2aaaAaga+aMaMa-a-aAaobpbLbCaQbgafavapbXbQb+aFaeavafbVbVbEa2ambgbcaqb+a2a}

\subsection*{$\mathbb{S}^4 \setminus \spun (7a_6)$}

\is{CLLLLvvzQzPMQQvAPPQAQQQQcabchnqqgrmrjukpsmuqvowrztAAAutvuBzxyxzAzByBfafafaaaaaaaMafaaada7aHbtaHbIa7a0aEaqbaaQbdaQb7aybPaPaPafbRakb2azbqbQbqbtb5aca5aRafa2a}

\subsection*{$\mathbb{S}^4 \setminus \spun (7a_3)$}

\is{CLLLLLAzPvQALzQAAPPQMQQQcbcchgikmkinjmkqmtvuqwwpprwtAzsvuxBxyBzyBAABWa1a2aaaaaEaNbaaJafaQaRaaaRa7aEaaaya2awaaaaabayaTbWaTbJb7afatbQbzbaaWbfagawayaja2aTbIb}

\subsection*{$\mathbb{S}^4 \setminus \spun (7a_2)$}

\is{GLLAvzLwALQPPQvzPPALAQQQPQQkcbdcgihlklqrklkttpooqpzyvAwxCuyyCwDByDEzBAFECEEFFEagaqbHbWaiboaHbvaCawbtacbCacbhahaWajbjbmbyaaaMa3bhbmajbiabajbjbqb1a+agb1a+aQbyaMbqbUajasasajajbya}

\subsection*{$\mathbb{S}^4 \setminus \spun (7a_1)$}

\is{MLLLvLAvAPLLLwAwwQMQQPMQALQQQMQQkabfllelnmkrpnxwxorEEtCrCuxEztwGxCFHvyzKIKIHFIBJDDLHHJGKJLLfafalaaaqbfaqblaWboaaaebaaaaNbaaoaja7aaaTb0akaea+apbEboaQbYb3apbZbLaaabayaTbtaJbybyb6asayboaVayafaabgb+atbcaRafaUafa}

\subsection*{$\mathbb{S}^4 \setminus \spun (7a_7)$}

\is{mLLAMLvQQQQcdecdgglhlkiijlkkklAbybXaXaFaaaZbaaCbsbpa1b1bbaQaaazbzbZa}

\subsection*{$\mathbb{S}^4 \setminus \spun (8a_{11})$}

\is{ALLLLzPwLALMPQwPQMQAQQkcbgfjiikgokpnqqrnmpnuuwxswtssvtyzvxzzxzyaagarbaalaaaaaaagarbka8atataxa8aGbGbaaHbPbybEaGbbavbsaGa+aGbvatajbyafbmbcafa9afa}

\subsection*{$\mathbb{S}^4 \setminus \spun (8a_8)$}

\is{CLLLLvPwMzPAAQMPwPMPQQQQccbgfiijhgqnllkqlssqottpqurvzxuBzByAwyAzyBBAaagarbaaaaaa0bPbgala0bXbCaBblagbaaaaDbvbna2aXaJaPbga0brbZagaIa2anaBbkaoaBbXbkanayayaga}

\subsection*{$\mathbb{S}^4 \setminus \spun (8a_{18})$}

\is{ILLLLLLPwzLvPvwwQQQAQMQQQQQPQcabffkmhhlostsvtoBDpwqByuxACvCCGGyuxDyEGAAAEBzDHEFHGHcaca3baadaiaXbXbaaiaIaaanbKaGa1bdaiaibIbtbDbXaXaIbSbSb1bSbmaIaSbwavaqbGb2a4aIaWaWa0aKbKbfabaaacaoaiaFbga}

\subsection*{$\mathbb{S}^4 \setminus \spun (8a_{17})$}

\is{CLLLLvPAMwLQPQvMMQLQPQQQccbgfiijmhglklpnpqpnpouqxvvxtyzyBAwzxxzByABBaagarbaaaaaaGbFaPbgaJaHbJadbaaaaaadbQaIagaPbgaIazalarbIbaa0b0bHaAbIaCbTbCboatbgaTbcbcb}

\subsection*{$\mathbb{S}^4 \setminus \spun (8a_{13})$}

\is{WLLLvQLLwLvvwMvwMMzQQvMPwPQPQMQPQMQQQQQQQabgihjlmkqqooCzyEzystusrwvFFwDxHEIGDMALBOEIQQPUOJIHTPSNNQMMLUSRSPSVRTTVUVcacaaaaahaaaiaaaHaaahaYbYbaaHaPbaaaasacavaPbLbcafajaiaiasaiacaWbWbqaYbPbbbcafbgaKaibPbkagakaUagaBbbbqbOb5aKbtbLbfafawava0aLayanbqbnbVbMb-aPaYbqbqb}

\subsection*{$\mathbb{S}^4 \setminus \spun (8a_6)$}

\is{GLLLLLAPPvAAALAQzPAQwMPMQQQkbcchkjgmhkjpkjmsmvvpovwqsrxzuuAAywzBzDDCBCECFFFEFWa1a2aaa7aWaWaaa1afaJaaafbTbWa7aEaaaaavayadaaaTbTbPbaaDaba5aNbNbebya2aubTbdbjbYa5abaab6auaHbcayaba}

\subsection*{$\mathbb{S}^4 \setminus \spun (8a_{10})$}

\is{ILLLLLAzPwPPAvAAPwPwMMQQMQQQQcbcchgikmkiopmqppstvvvoxsCsAwuyzFGyzxGDGACHHBEGFFEHEHWa1a2aaaaaEaNbaaJafaaaaaaaaaWaWalaaaYaYaYayadaJaEafaJbtbQbobhbaaaaAaEbyaga1ajaFaJaWaWagaRaIbTb1afaVbPbaa}

\subsection*{$\mathbb{S}^4 \setminus \spun (8a_{16})$}

\is{ILLLLLLAwzMPAwPPPPwMAwQQQPQQQcbccfgkijmlooostpptwnustrxswtAAwzCFEGBDyzCHDDFEEGFGHHWaYa2aEaaaaaqbKaKavaLatata8a3aQaQaZaaaya1aOaYbKa1aKavavaQaQaKadaTaiaaaWawbaacayaybaa2aKacaMbMb+aKaibdaya}

\subsection*{$\mathbb{S}^4 \setminus \spun (8a_4)$}

\is{OLLLLALvvQALLvwzPMQQPAQAMzQMQQPQQQcccegdihormipktprrCApvxsEzvEEvvGGAwyyHJBLCIDKGMDLJIFHHILMMLNNNaaRaRaaagarb2aVbObDaga0aRaVbDa6aybWaNbPbbbRararbRayauaAbgagaaaEaEaPboayaaaGbVbObfbfaoa+aPbJbPbkacb2afaJagaPb2aebIatb8agbfa}

\subsection*{$\mathbb{S}^4 \setminus \spun (8a_9)$}

\is{OLLLLLvAvvAvzwPLAQPPPQwAAQQQQQQQQQcbcchgijotzzrrmsCAvxpHCAGEBGDzuHvxwLJKNFyDIBMBCEEGKFLKNINJMNMMWa1a2aaaaaPbKaQaaaobaaEaEabaaaWbPaPbKayaaayajaFa2atb2aBbybTbjbCa2aPbaaPasbsb1aca1a0bmb-a2abayayaLbwbbaAbMbBayawaya6aMb2aTb}

\subsection*{$\mathbb{S}^4 \setminus \spun (8a_7)$}

\is{OLLLLAPLLzAAPMzMPvMLwQQMPQwQQLQMQQccbgfhhiikgnjnmrsppsrpxtvyzCCuDwvCwFEIFBAIGJGHEDJFJLLMNMMKLNMNaagarbaaaaaaaaaaCbgaUaVaaa-a7aaalata1aTbPbEaPbmarbSb-a7agaZb9aoayaba6a6alaPbub+arb+aWaibNbSaqbxagaMaaaCaCaJbCaca2a+ataGaya}

\subsection*{$\mathbb{S}^4 \setminus \spun (8a_3)$}

\is{QLLLALLvwwzQwLLPQAALAAAPQPPAwQPQQQQQbdgdfefiittlksmnupBAqqvCvzsyyuAByEIJyFIIBJDMNEOKGHHNKLJLLMMNPPOP7a5adapbSb2aPbTbpbEaaaba2aebPbPb5aMbaaaaCbyaXbGbvaSbgababa2atasbub4aaaAa+a1bkakaNakaqbaaaa2aVbibpbYaoasbbanb+aNbUaVb9aqbca2avava}

\subsection*{$\mathbb{S}^4 \setminus \spun (8a_5)$}

\is{QLLLAvLvMzwPMPMzQMvAQMPzPMQzAQAQQQQQabfdfhoonjsnrurusvoAvtAqCDuAtvFBFxFEyAzJEJKIIKOFHHJOPMKOPNLONMNPfafaMaoaMaBaMaMaaaRaaatataIbBaIbMbIboa7avaJaLaraaahafbHaJaTbabyaCawaRaJagaRafaaaraKbladatatahaoa1aTb0aVbaayawaKapbJagasaqbfafa2a}

\subsection*{$\mathbb{S}^4 \setminus \spun (8a_1)$}

\is{QLLLvLLPLwvLPQQPzAMzAzAQMMMPQQMQAQQQabcgfhhlpslxqpupqtnuqrrqyxuxCGHEEGDDDJAMLHCLMMLKJONFOKOMIPJKONPPcacacayavayayaYaDbmaKaaasbMaaaSbCaNatbJaHbyayasava+aKaoaaaaaaaiamaQaZaZanbwacasb3awawaHaHaHaMavayaaaUacajbgbIaHacahbcacaQbqbJaoa}

\subsection*{$\mathbb{S}^4 \setminus \spun (8a_2)$}

\is{ULLLLwvQLzvwzLwAMLQvQAMQAPzQQzQQAPQQQQQccbfghkjfhojvpynzuoqGFzuHIJuKvICCxxLyNNMAGJBLHRRKNEONSTSJKRTQLMPTPPSQRTEaga7aTaaaaaKagaMb3aqbaa7aaasaaawbfaTbrbPaga2baaAazaFbMarazbGajayaGaqaFaLa0aaaga7aebvaZa8atadaEavafagaxbaaqa3aFb2ambZbIbyagaGadagasauavacasa}

\subsection*{$\mathbb{S}^4 \setminus \spun (8a_{15})$}

\is{SLLLLALwvvQLPPAzQvvQMQQPQLzQAPMAQPQQQkabchggfkookvvltmnrsuzqstEGzzBxAxwwBCACDJJFDGJMGIHNOONLKPMRRQMPOPRRQcacacaaaTaTavanbTaTanbdaaatbtbgbKabafaaaiaPbKaYa4aaahbhbYaJaXbFaQbca9a9aKa2bfadarbPbraPbDbJbKaZa2akb4aCbcbXb2avaYaiadafacaqbcaqb0aQaoa}

\subsection*{$\mathbb{S}^4 \setminus \spun (8a_{14})$}

\is{YLLLLvvvPMPPLwMvAvLQvLPzAQQQQPQAQAQMQQMQQQkcbgfmgksonurttwzvDIzArNMwuHEyMOxLAPIQQBQBDPETMFHFTKGKUJQWSUNOOWPUWURVXXXVTVXEagaHaaaPbgaibHaHboaaatauaiataaajbmatatataHblaEarayaHboaga7atava7afa7avaiaiajauagaibObpbEaSbXbgaoaiaZbfaZbxaHbuaaajbVagaHbgaJavaMbHbMbyacbhbdbdbcbgafazb}

\subsection*{$\mathbb{S}^4 \setminus \spun (8a_{12})$}

\is{WLLLLAvvLvvPwPMQQvPLzPvQQMMMQQQAzQQPQQQQQcbgfhhgttnrzsmAtwoCsExFsuCBNHGwOxEyIMAMRDKLGJDPGHJQJTSJLRVRMSVOTVPTSQUTUVaagarbaaaaaagaaaaabaaaaalasaaamaaamaaamaiaibaaPbbarabaaaAaTasaaasa4agaaalasamaqamahahahbKboaka1b1b+agbfaobiaubgajahaoagaObmasaKblagama0bgahbKaPbUb}

\subsection*{$\mathbb{S}^4 \setminus \spun (8n_1)$}

\is{yLLLLvLvQwAQLAQQPQQQQcbgfhhmgirmtslrtnrwsvqpqpruqutxwwxwvxaagarbaanbPbPbgaGaAbVaaaAavaaalagbzbLarbjbAbAbAbnbzbHbga2aKaPbgayanayafaga}

\subsection*{$\mathbb{S}^4 \setminus \spun (8n_2)$}

\is{GLLLLLvAPvvPAPMAQQwQPPQQMQQkbcchgikmlpsjwvowosnyvtpqsszrxBCuBEBABCDAADEEFCFFFWa1a2aaaaaqbFaRa1ahbaabaaaZbDbZbDb4a2aaaqb1ayabaqbSacabafbjbaaba3baapafbpanbaafbRaLaAaAaaabavavaCb}

\subsection*{$\mathbb{S}^4 \setminus \spun (8n_3)$}

\is{mLLLMMMPMQQcecegfiihhjjillkkllZbXbXaFaaaQaaaabraraRaRa6axaaaDa3bgaBb}
	
\end{document}